\documentclass[a4paper]{amsart}
\usepackage{graphicx,amsmath,amsfonts,latexsym,amssymb,amsthm,mathrsfs, color}
\usepackage[latin1]{inputenc}
\evensidemargin0cm
\oddsidemargin0cm
\textheight22cm
\textwidth16cm


\renewcommand{\Re}{\mathop{\rm Re}}

\newtheorem{thm}{Theorem}[section]

\newtheorem{corollary}[thm]{Corollary}
\newtheorem{definition}[thm]{Definition}
\newtheorem{theorem}[thm]{Theorem}
\newtheorem{lemma}[thm]{Lemma}
\newtheorem{remark}[thm]{Remark}

\begin{document}
\title[]%
{Preserving closedness of operators under summation}
\author{Nikolaos Roidos}
\address{Institut f\"ur Analysis, Leibniz Universit\"at Hannover, Welfengarten 1, 30167 Hannover, Germany}
\email{roidos@math.uni-hannover.de}

\subjclass[2000]{}
\date{\today}

\begin{abstract}
We give a sufficient condition for the sum of two closed operators to be closed. In particular, we study the sum of two sectorial operators with the sum of their sectoriality angles greater than $\pi$. We show that if one of the operators admits bounded $H^{\infty}$-calculus and the resolvent of the other operator satisfies a boundedness condition stronger than the standard sectoriality, but weaker than the bounded imaginary powers property in the case of UMD spaces, then the sum is closed. We apply the result to the abstract parabolic problem and give a sufficient condition for $L^{p}$-maximal regularity.
\end{abstract}

\date{\today}

\maketitle

\section{introduction}

Let $E$ be a Banach space and $A$, $B$ be two closed linear operators in $E$ with domains $\mathcal{D}(A)$ and $\mathcal{D}(B)$ respectively. We study the problem of whether the sum $A+B$ with domain $\mathcal{D}(A)\cap\mathcal{D}(B)$ is closed. An important application of the closedness of the sum is the $L^{p}$-maximal regularity property. Consider the Cauchy problem
\begin{gather}\label{AP}
\Big\{\begin{array}{lclc} f'(t)+Af(t)=g(t), & t\in(0,\tau)\\
f(0)=0&
\end{array} 
\end{gather}
in the $E$-valued $L^{p}$-space $L^{p}(0,\tau;E)$, where $p>1$ and $\tau>0$ are finite and $-A$ is the infinitesimal generator of a bounded analytic semigroup on $E$. The operator $A$ satisfies {\em $L^{p}$-maximal regularity} if for some $p$ (and hence by \cite{Do} for all) we have that for any $g\in L^{p}(0,\tau;E)$ the unique solution 
\begin{gather*}
f(t)=\int_{0}^{t}e^{(x-t)A}g(x)dx
\end{gather*}
belongs to the first Sobolev space $W^{1,p}(0,\tau;E)$. It is not difficult to see that the above definition is independent of $\tau$. For applications of the $L^{p}$-maximal regularity property to nonlinear problems, using Banach fixed point argument, we refer to \cite{CL}.

In \cite{DV} the abstract problem of the closedness of the sum has been studied in the case of a UMD space (unconditionality of martingale differences) for sectorial operators that have bounded imaginary powers, and a sufficient condition for closedness has been given. In \cite{KW} the same problem has been examined in the approach of operator valued functional calculus, and it has been shown that $R$-sectoriality for one of the operators together with bounded $H^{\infty}$-calculus for the other operator are sufficient for solution. In both cases, an application to the problem of $L^{p}$-maximal regularity has been given. 

In this paper we study the closedness of the sum of two closed operators in the case of sectorial operators with the sum of their sectoriality angles greater than $\pi$. By using the classical formula for the inverse of the closure of the sum of the two operators and the ideas of Theorem 4.4 in \cite{KW} (i.e. dyadic decomposition of $\mathbb{R}$ and an appropriate use of the bounded $H^{\infty}$-calculus property), we observe that the bounded imaginary powers property generates Banach space valued trigonometric polynomials and hence, an analogue boundedness condition can be imposed. In this way we show that if one of the operators has bounded $H^{\infty}$-calculus and the resolvent of the other operator satisfies some Hardy-Littlewood majorant type inequality, which is a boundedness condition stronger than the standard sectoriality and similar to the $R$-sectoriality, then the sum is closed. In the case of spaces having UMD, we further show that the above boundedness condition is weaker than the bounded imaginary powers property. We finally apply the result to the problem (\ref{AP}) and give a sufficient condition for $L^{p}$-maximal regularity in the case of UMD spaces.

\section{The closedness of $A+B$}

The sectoriality property is essential since it implies closability for the sum of two closed operators.

\begin{definition}
Let $E$ be a Banach space, $K\geq1$ and $\theta\in[0,\pi)$. Let $\mathcal{P}_{K}(\theta)$ be the class of closed densely defined linear operators in $E$ such that if $A\in\mathcal{P}_{K}(\theta)$, then 
\[
\Lambda_{\theta}=\{z\in\mathbb{C}\,|\, |\arg z|\leq\theta\}\cup\{0\}\subset\rho{(-A)} \,\,\,\,\,\, \mbox{and} \,\,\,\,\,\, (1+|z|)\|(A+z)^{-1}\|\leq K, \,\,\,\,\,\, \forall z\in \Lambda_{\theta}.
\]
Also, let $\mathcal{P}(\theta)=\cup_{K}\mathcal{P}_{K}(\theta)$. The elements in $\mathcal{P}(\theta)$ are called {\em sectorial operators of angle $\theta$}.
\end{definition}

If $A\in \mathcal{P}_{K}(\theta)$, then by a sectoriality extension argument (see e.g. III.4.7.11 in \cite{Am} or the Appendix of \cite{Ro}) we have that 
\begin{gather*}
\Omega_{K,\theta}=\cup_{\lambda\in\Lambda_{\theta}}\{z\in\mathbb{C}\,|\,|z-\lambda|\leq(1+|\lambda|)/2K\}\subset \rho{(-A)} 
\end{gather*}
and
\begin{gather*}
(1+|z|)\|(A+z)^{-1}\|\leq 2K+1, \,\,\,\,\,\, \forall z\in \Omega_{K,\theta}.
\end{gather*}
For any $\rho\geq0$ and $\theta\in(0,\pi)$, let $\Gamma_{\rho,\theta}$ be the positively oriented path 
\[
\{\rho e^{i\phi}\in\mathbb{C}\,|\,\theta\leq\phi\leq2\pi-\theta\}\cup\{re^{\pm i\theta}\in\mathbb{C}\,|\,r\geq\rho\}.
\]
If $\rho=0$, we denote $\Gamma_{\rho,\theta}$ by $\Gamma_{\theta}$. The complex powers of an operator $A\in\mathcal{P}(\theta)$ in a Banach space $E$ are defined by the Dunford integral. For $\mathrm{Re}(z)<0$ and $\rho>0$ sufficiently small, we have that
\begin{gather*}
A^{z}=\frac{1}{2\pi i}\int_{\Gamma_{\rho,\theta}}(-\lambda)^{z}(A+\lambda)^{-1}d\lambda,
\end{gather*}
which together with $A^{0}=I$ is a strongly continuous holomorphic semigroup on $E$ (see e.g. Theorems III.4.6.2 and III.4.6.5 in \cite{Am}). The imaginary powers are defined by the closure of 
\begin{gather*}
\frac{\sin (i\pi t)}{i\pi t}\int_{0}^{\infty}\lambda^{it}(A+\lambda)^{-2}A d\lambda \,\,\,\,\,\, \mbox{in} \,\,\,\,\,\, \mathcal{D}(A),
\end{gather*}
and can be bounded or unbounded operators. If there exists some $\varepsilon>0$ and $\delta>0$ such that 
\begin{gather*}
A^{it}\in\mathcal{L}(E) \,\,\,\,\,\, \mbox{and} \,\,\,\,\,\, \|A^{it}\|\leq \delta \,\,\,\,\,\, \mbox{for all} \,\,\,\,\,\, t\in[-\varepsilon,\varepsilon],
\end{gather*}
then $A^{it}\in\mathcal{L}(E)$ for all $t\in\mathbb{R}$ and there exist some constants $M\geq 1$ and $\phi\geq0$ such that $\|A^{it}\|\leq M e^{\phi|t|}$, $t\in\mathbb{R}$ (see e.g. Corollary III.4.7.2 in \cite{Am}). In that case we say that {\em $A$ has bounded imaginary powers (with power angle $\phi$)}, and the family $\{A^{z}\, | \, \mathrm{Re}(z)\leq0\}$ is a strongly continuous semigroup on $\mathcal{L}(E)$, i.e. the map from $\{z\in\mathbb{C}\, |\, \mathrm{Re}(z)\leq0\}$ to $\mathcal{L}(E)$ (equipped with the strong operator topology) defined by $z\rightarrow A^{z}$ is a continuous representation of the additive semigroup $\{z\in\mathbb{C}\, |\, \mathrm{Re}(z)\leq0\}$ (this is Theorem III.4.7.1 in \cite{Am}).

\begin{definition} 
Let $E$ be a Banach space and $A\in\mathcal{P}(\theta)$, $\theta\in(0,\pi)$. Let $H_{0}^{\infty}(\theta)$ be the space of all bounded holomorphic functions $f:\mathbb{C}\setminus \Lambda_{\theta}\rightarrow \mathbb{C}$ such that 
\[
|f(\lambda)|\leq c\big(\frac{|\lambda|}{1+|\lambda|^{2}}\big)^{\eta}, \,\,\, \mbox{for any} \,\,\, \lambda\in \mathbb{C}\setminus \Lambda_{\theta}, 
\]
and some $c>0$, $\eta>0$ depending on $f$. Any $f\in H_{0}^{\infty}(\theta)$ can be extended to non-tangential values in $\partial \Lambda_{\theta}$, and defines an element in $\mathcal{L}(E)$ by 
\[
f(-A)=\frac{1}{2\pi i}\int_{\Gamma_{\theta}}f(\lambda)(A+\lambda)^{-1} d\lambda.
\]
We say that the operator $A$ admits a {\em bounded $H^{\infty}$-calculus} if
\begin{gather}\label{eb}
\|f(-A)\|\leq C_{A}\sup_{\lambda\in\mathbb{C}\setminus \Lambda_{\theta}}|f(\lambda)|, \,\,\, \mbox{for any} \,\,\, f\in H_{0}^{\infty}(\theta),
\end{gather}
where $C_{A}>0$ depends only on $A$. 
\end{definition}

Note that operators admitting bounded $H^{\infty}$-calculus automatically have bounded imaginary powers (see e.g. Corollary 2.2 in \cite{CDMY} and Lemma III.4.7.4 in \cite{Am}). Moreover, the boundedness condition (\ref{eb}) also holds for bounded holomorphic functions $f:\mathbb{C}\setminus \Lambda_{\theta}\rightarrow \mathbb{C}$ satisfying 
\begin{gather*}
|f(\lambda)|\leq c\frac{|\lambda|^{\eta}}{1+|\lambda|}, \,\,\, \mbox{for any} \,\,\, \lambda\in \mathbb{C}\setminus \Lambda_{\theta},
\end{gather*}
and some $c>0$, $\eta\in(0,1)$ depending on $f$ (see e.g. Corollary 2.2 in \cite{CDMY}). We recall next a well known decay property of the resolvent.
\begin{lemma}\label{l1}
Let $E$ be a Banach, $A\in\mathcal{P}(\theta)$, $\theta>0$, and $x\in\mathcal{D}(A^{\phi})$ for some $\phi\in(0,1)$. Then, for any $\theta'\in[0,\theta)$ and  $\eta\in[0,\phi)$, $z^{\eta}A(A+z)^{-1}x$ is bounded in $\Lambda_{\theta'}$. 
\end{lemma}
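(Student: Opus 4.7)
The plan is to reduce the claim to a standard decay estimate for a fractional power composed with a resolvent. Since $0\in\rho(-A)$ by the definition of $\mathcal{P}(\theta)$, the negative power $A^{-\phi}$ is a bounded operator on $E$. Setting $y:=A^{\phi}x\in E$ gives $x=A^{-\phi}y$; moreover $A^{-\phi}$ commutes with $(A+z)^{-1}$ (both being functions of $A$ in the Dunford calculus), and on $\mathcal{D}(A)$ one has $A\cdot A^{-\phi}=A^{1-\phi}$, so
\[
z^{\eta}A(A+z)^{-1}x \;=\; z^{\eta}A(A+z)^{-1}A^{-\phi}y \;=\; z^{\eta}A^{1-\phi}(A+z)^{-1}y.
\]

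The core step will then be the bound
\[
\|A^{1-\phi}(A+z)^{-1}\|_{\mathcal{L}(E)} \;\leq\; C(1+|z|)^{-\phi}, \qquad z\in\Lambda_{\theta'}.
\]
I would derive this from the moment (Heinz) inequality $\|A^{s}u\|\leq C\|u\|^{1-s}\|Au\|^{s}$, valid for $s\in[0,1]$ and $u\in\mathcal{D}(A)$, applied with $s=1-\phi$ and $u=(A+z)^{-1}y\in\mathcal{D}(A)$. The two inputs were already recorded in the excerpt just after the definition of $\mathcal{P}_{K}(\theta)$: the sectoriality estimate $\|(A+z)^{-1}\|\leq K/(1+|z|)$, and its consequence $\|A(A+z)^{-1}\|=\|I-z(A+z)^{-1}\|\leq K+1$. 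Raising these two bounds to the powers $\phi$ and $1-\phi$ respectively yields the claimed estimate. An equivalent route is to substitute the Dunford representation $A^{-\phi}=\frac{\sin\pi\phi}{\pi}\int_{0}^{\infty}s^{-\phi}(A+s)^{-1}\,ds$ into $A(A+z)^{-1}A^{-\phi}$, apply the resolvent identity, and estimate the resulting scalar integral by splitting at $s=|z|$.

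Combining the two steps gives
\[
\|z^{\eta}A(A+z)^{-1}x\| \;\leq\; C|z|^{\eta}(1+|z|)^{-\phi}\|A^{\phi}x\|,
\]
and for $\eta\in[0,\phi)$ the real-variable function $t\mapsto t^{\eta}(1+t)^{-\phi}$ is bounded on $[0,\infty)$, tending to $0$ at infinity and, when $\eta>0$, also at the origin. The only mildly delicate point will be the rigorous justification of the algebraic identity $A(A+z)^{-1}A^{-\phi}=A^{1-\phi}(A+z)^{-1}$ and of the moment inequality for members of $\mathcal{P}(\theta)$; both are classical for sectorial operators with $0\in\rho(-A)$ (cf.\ Amann \cite{Am}, \S III.4.6--7), so the main ``obstacle'' is really just locating the right references rather than performing a genuine calculation.
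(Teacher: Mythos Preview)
Your argument is correct and takes a genuinely different route from the paper. The paper substitutes the Dunford integral for $A^{-\phi}$ directly into $z^{\eta}A(A+z)^{-1}x$, applies the resolvent identity and Cauchy's theorem, and after several cancellations arrives at the explicit representation
\[
z^{\eta}A(A+z)^{-1}x=\frac{1}{2\pi i}\int_{\delta-\Gamma_{\theta}}\frac{(z/\lambda)^{\eta}}{1+z/\lambda}\,\lambda^{-(\phi-\eta)}(A-\lambda)^{-1}y\,d\lambda,
\]
from which boundedness is read off by inspection of the integrand. You instead reduce everything to the operator-norm bound $\|A^{1-\phi}(A+z)^{-1}\|\leq C(1+|z|)^{-\phi}$, obtained from the moment inequality applied to $u=(A+z)^{-1}y$. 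Your path is shorter and more conceptual, and it delivers a clean uniform estimate $\|z^{\eta}A(A+z)^{-1}x\|\leq C|z|^{\eta}(1+|z|)^{-\phi}\|A^{\phi}x\|$ that makes the dependence on $x$ explicit; the paper's contour computation is more self-contained (it does not invoke the moment inequality as an external fact) and yields an integral formula that could be useful if one wanted sharper asymptotics. Your alternative suggestion---inserting the Balakrishnan representation of $A^{-\phi}$ and splitting the resulting scalar integral at $s=|z|$---is in spirit the real-axis version of what the paper does on the deformed contour $-\delta+\Gamma_{\theta}$.
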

\begin{proof}
There exists a $y\in E$ such that
\begin{gather*}
x=A^{-\phi}y=\frac{1}{2\pi i}\int_{-\delta+\Gamma_{\theta}}(-\lambda)^{-\phi}(A+\lambda)^{-1}yd\lambda,
\end{gather*}
for some $\delta>0$ sufficiently small, where we have used a sectoriality extension argument. Hence, for any $z\in\Lambda_{\theta'}$ by Cauchy's theorem we have that
\begin{eqnarray*}
\lefteqn{z^{\eta}A(A+z)^{-1}x=\frac{z^{\eta}}{2\pi i}A\int_{-\delta+\Gamma_{\theta}}(-\lambda)^{-\phi}(A+z)^{-1}(A+\lambda)^{-1}yd\lambda}\\
&=&\frac{z^{\eta}}{2\pi i}A\int_{-\delta+\Gamma_{\theta}}\frac{(-\lambda)^{-\phi}}{\lambda-z}\big((A+z)^{-1}-(A+\lambda)^{-1}\big)yd\lambda\\
&=&\frac{z^{\eta}}{2\pi i}A(A+z)^{-1}\int_{-\delta+\Gamma_{\theta}}\frac{(-\lambda)^{-\phi}}{\lambda-z}yd\lambda-\frac{z^{\eta}}{2\pi i}A\int_{-\delta+\Gamma_{\theta}}\frac{(-\lambda)^{-\phi}}{\lambda-z}(A+\lambda)^{-1}yd\lambda\\
&=&-\frac{z^{\eta}}{2\pi i}\int_{-\delta+\Gamma_{\theta}}\frac{(-\lambda)^{-\phi}}{\lambda-z}(A+\lambda-\lambda)(A+\lambda)^{-1}yd\lambda\\
&=&-\frac{z^{\eta}}{2\pi i}\int_{-\delta+\Gamma_{\theta}}\frac{(-\lambda)^{-\phi}}{\lambda-z}yd\lambda+\frac{z^{\eta}}{2\pi i}\int_{-\delta+\Gamma_{\theta}}\frac{\lambda(-\lambda)^{-\phi}}{\lambda-z}(A+\lambda)^{-1}yd\lambda\\
&=&\frac{z^{\eta}}{2\pi i}\int_{-\delta+\Gamma_{\theta}}\frac{\lambda(-\lambda)^{-\phi}}{\lambda-z}(A+\lambda)^{-1}yd\lambda.
\end{eqnarray*}
The result now follows by the relation
\begin{gather*}
z^{\eta}A(A+z)^{-1}x=\frac{1}{2\pi i}\int_{\delta-\Gamma_{\theta}}\frac{(\frac{z}{\lambda})^{\eta}}{1+\frac{z}{\lambda}}\lambda^{-(\phi-\eta)}(A-\lambda)^{-1}yd\lambda.
\end{gather*}
\end{proof}

We define next a boundedness condition stronger than the standard sectoriality, that is a Hardy-Littlewood majorant type of estimation defined on Banach space valued trigonometric polynomials. 
\begin{definition}
Let $E$ be a Banach space and $\theta\in[0,\pi)$. Let $\mathcal{T}(\theta)$ be the subclass of $\mathcal{P}(\theta)$ such that if $A\in\mathcal{T}(\theta)$ then for any $\phi\in[-\theta,\theta]$,  $r\in[\frac{1}{e},1]$ and $x_{0},...,x_{n}\in E$, for all $n\in\mathbb{N}$, there exists some collection $a_{0}(t),...,a_{n}(t)\in\{f\in L^{\infty}(0,2\pi)\, | \, \|f\|_{\infty}\leq1\}$ depending on $A$, $n$, $\phi$, $r$ and $x_{0},...,x_{n}$, such that
\[
\|\sum_{k=0}^{n}e^{ikt}(I+re^{-k+i\phi}A)^{-1}x_{k}\|_{L^{p}(0,2\pi;E)} \leq C_{A}^{p,\phi}\|\sum_{k=0}^{n}a_{k}(t)x_{k}\|_{L^{p}(0,2\pi;E)},
\] 
for some fixed $p\in(1,\infty)$ and some constant $C_{A}^{p,\phi}>0$ depending on $A$, $p$ and $\phi$. Let $\mathcal{T}^{\ast}(\theta)$ be the subclass of $\mathcal{T}(\theta)$ such that $a_{0}(t),...,a_{n}(t)$ are independent of  $x_{0},...,x_{n}$. The elements in $\mathcal{T}(\theta)$ and $\mathcal{T}^{\ast}(\theta)$ are called {\em $T$-sectorial} and {\em $T^{\ast}$-sectorial operators of angle $\theta$} respectively. 
\end{definition}

Since we will consider only commuting operators, we recall the following definition. 

\begin{definition}
Two closed linear operators $A$, $B$ in a Banach space $E$ are {\em resolvent commuting} if there exist some $\lambda\in\rho(-A)$ and $\mu\in\rho(-B)$ such that 
\[
[(A+\lambda)^{-1},(B+\mu)^{-1}]=0.
\]
\end{definition}

Then, we can use the commutation properties from Lemmas III.4.9.1 and III.4.9.2 in \cite{Am}. By using the ideas from Theorem 4.4 in \cite{KW}, we have the following result on the closedness of the sum of two sectorial operators.

\begin{theorem}\label{t1}
Let $E$ be a Banach space and $A\in\mathcal{P}(\theta_{A})$, $B\in\mathcal{P}(\theta_{B})$ be resolvent commuting with $\theta_{A}+\theta_{B}>\pi$. If one of the operators is $T$-sectorial and the other one has bounded $H^{\infty}$-calculus, then $A+B$ with domain $\mathcal{D}(A)\cap\mathcal{D}(B)$ is closed and $0\in\rho(A+B)$. 
\end{theorem}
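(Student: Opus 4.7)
The plan is to construct the bounded inverse of $A+B$ as a Dunford-type contour integral in the spirit of Da Prato--Grisvard, and then upgrade its mere boundedness to the statement that $A+B$ itself (and not only its closure) is closed with $0\in\rho(A+B)$. Without loss of generality let $A\in\mathcal{T}(\theta_{A})$ and let $B$ admit bounded $H^{\infty}$-calculus. Pick $\phi$ with $\pi-\theta_{A}<\phi<\theta_{B}$, let $\Gamma$ consist of the two rays $\{re^{\pm i\phi}:r\ge 0\}$, oriented so as to separate $\sigma(A)$ from $-\sigma(B)$, and define
$$
Sy \;=\; -\frac{1}{2\pi i}\int_{\Gamma}(A-\lambda)^{-1}(B+\lambda)^{-1}y\,d\lambda,\qquad y\in E.
$$
Resolvent commutativity (Lemmas III.4.9.1 and III.4.9.2 in \cite{Am}), Cauchy's theorem, and an approximation via Yosida regularisations, combined with the sectoriality estimates for $A$ and $B$ and the hypothesis $\theta_{A}+\theta_{B}>\pi$, show that the integral defines a bounded operator $S\in\mathcal{L}(E)$ that inverts the closure $\overline{A+B}$. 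It therefore suffices to prove $BS\in\mathcal{L}(E)$: the identity $AS=I-BS$ then gives boundedness of $AS$ as well, so closedness of $A$ and of $B$ forces $Sy\in\mathcal{D}(A)\cap\mathcal{D}(B)$ for every $y\in E$, whence $\overline{A+B}=A+B$ and $0\in\rho(A+B)$.

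To estimate $BS$, decompose $\Gamma$ dyadically: $\Gamma_{k}=\Gamma\cap\{e^{k}\le|\lambda|\le e^{k+1}\}$, parametrise $\lambda=e^{k+s}e^{\pm i\phi}$ with $s\in[0,1]$, and use the substitution $r=e^{-s}\in[1/e,1]$ to rewrite each $(A-\lambda)^{-1}$, up to scalar factors, in the form $(I+re^{-k+i\phi_{0}}A)^{-1}$ appearing in the definition of $\mathcal{T}(\theta_{A})$, with $\phi_{0}=\pi-\phi\in(-\theta_{A},\theta_{A})$. This represents the contribution of the ray at angle $\phi$ as
$$
\int_{1/e}^{1}\Bigl(\sum_{k\in\mathbb{Z}}(I+re^{-k+i\phi_{0}}A)^{-1}\,\psi_{k}(r,B)\,y\Bigr)\frac{dr}{r},
$$
where each $\psi_{k}(r,\cdot)\in H_{0}^{\infty}(\theta_{B})$ is essentially the scale-$k$ version of $z(z+\mu_{k}(r))^{-1}$ with $\mu_{k}(r)=e^{k}r^{-1}e^{i\phi}$, with uniform $H_{0}^{\infty}$-bounds, and the ray at angle $-\phi$ contributes a similar term. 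For each fixed $r$, view $\sum_{k}\Phi_{k}(r)y$ as the value at $t=0$ of the trigonometric polynomial $t\mapsto\sum_{k}e^{ikt}\Phi_{k}(r)y$; the $\mathcal{T}(\theta_{A})$-hypothesis then dominates the $L^{p}(0,2\pi;E)$-norm of this polynomial by $C\|\sum_{k}a_{k}(t)\psi_{k}(r,B)y\|_{L^{p}(0,2\pi;E)}$ with $\|a_{k}\|_{\infty}\le 1$, and bounded $H^{\infty}$-calculus of $B$, through its standard dyadic square-function-type consequences in the spirit of \cite{CDMY}, yields $\|\sum_{k}a_{k}(t)\psi_{k}(r,B)y\|_{L^{p}(0,2\pi;E)}\le C'\|y\|$ uniformly in $r$. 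Chaining these estimates, passing from the $L^{p}(0,2\pi;E)$-norm back to the $E$-norm of $\sum_{k}\Phi_{k}(r)y$ through the trigonometric-polynomial structure, and integrating in $r$, gives $\|BSy\|\le C''\|y\|$.

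The main obstacle is the coupling of the two functional-calculus steps. The $\mathcal{T}(\theta_{A})$-hypothesis controls the $A$-resolvent side only in terms of the $a_{k}$-side, and the $a_{k}$ themselves may depend on the test vectors $\psi_{k}(r,B)y$; one must therefore choose the dyadic partition so that the family $\{\psi_{k}(r,\cdot)\}_{k}$ has uniform $H_{0}^{\infty}(\theta_{B})$-bounds and appropriate decay at each dyadic scale, and then exploit the fact that the factors $e^{ikt}$ in the definition of $\mathcal{T}(\theta_{A})$ play the analogue role of the Rademacher functions in the $R$-sectorial argument of \cite{KW}, so that bounded $H^{\infty}$-calculus of $B$ delivers the $L^{p}(0,2\pi;E)$-estimate required to control $\{\psi_{k}(r,B)y\}_{k}$ against $L^{\infty}$-coefficients of modulus $\le 1$. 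A secondary technical point is to verify that the rescaled resolvents $(I+re^{-k+i\phi_{0}}A)^{-1}$, with $r\in[1/e,1]$ and $\phi_{0}\in(-\theta_{A},\theta_{A})$, genuinely fall within the admissible range of the $\mathcal{T}(\theta_{A})$-hypothesis.
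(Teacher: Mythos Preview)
Your overall architecture---Da Prato--Grisvard formula, dyadic decomposition of the contour, $T$-sectoriality for the $A$-side, bounded $H^{\infty}$-calculus for the $B$-side---matches the paper's. The genuine gap is the step you describe as ``passing from the $L^{p}(0,2\pi;E)$-norm back to the $E$-norm of $\sum_{k}\Phi_{k}(r)y$ through the trigonometric-polynomial structure.'' This step fails: there is no bound of the pointwise value $P(0)=\sum_{k}\Phi_{k}(r)y$ in terms of $\|P\|_{L^{p}(0,2\pi;E)}$ for the polynomial $P(t)=\sum_{k}e^{ikt}\Phi_{k}(r)y$ that is uniform in the number of terms. So even after the $T$-sectoriality estimate and the $H^{\infty}$-bound on the $a_{k}$-side, you cannot conclude anything about $\|BSy\|_{E}$.

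The paper's way around this is the essential idea you are missing: rather than artificially inserting $e^{ikt}$ and then trying to specialise to $t=0$, it arranges for the left-hand side itself to be a function of $t$. Concretely, it studies $A\mathcal{K}B^{-\theta+it}u$ (here $\mathcal{K}$ is your $S$), using that $B$ has bounded imaginary powers (a consequence of bounded $H^{\infty}$-calculus). The factor $(-\lambda)^{it}$ in the integral, after the dyadic substitution $\lambda=xe^{k}e^{\pm i\tilde\theta_{B}}$, produces the $e^{ikt}$ factors \emph{naturally}, while on the left $A\mathcal{K}B^{-\theta+it}u=B^{-it}(A\mathcal{K}B^{-\theta})u$ has $E$-norm uniformly comparable to the fixed quantity $\|A\mathcal{K}B^{-\theta}u\|_{E}$. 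Hence the $L^{p}(0,2\pi;E)$-norm of the left side equals $(2\pi)^{1/p}\|A\mathcal{K}B^{-\theta}u\|_{E}$ up to a BIP constant, and the $T$-sectoriality estimate is now genuinely informative. Finally one lets $\theta\to 0$.

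A secondary but related issue: your functions $\psi_{k}(r,z)=z(z+\mu_{k}(r))^{-1}$ satisfy $|\psi_{k}|\to 1$ as $k\to-\infty$, so $\sup_{z}\sum_{k\in\mathbb{Z}}|a_{k}(t)\psi_{k}(r,z)|=\infty$ and the $H^{\infty}$-bound you invoke does not hold. The paper avoids this by (i) inserting a fractional power $B^{\phi}$, so that the relevant symbols $(-x^{-1}e^{-k}z)^{\phi}/(-x^{-1}e^{-k}z+e^{\pm i\tilde\theta_{B}})$ decay on \emph{both} sides of the dyadic scale, and (ii) truncating the contour to $1<|\lambda|<e^{n}$ so that only $k=0,\dots,n-1$ appear (as required by the very definition of $\mathcal{T}(\theta)$, which involves only finite nonnegative $k$), handling the tails $|\lambda|\le 1$ and $|\lambda|\ge e^{n}$ via the absolute convergence furnished by Lemma~\ref{l1} and then letting $n\to\infty$.
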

\begin{proof}
We can assume that $\theta_{A}>\theta_{B}$. The fact that $A\in\mathcal{P}(\theta_{A})$ and $B\in\mathcal{P}(\theta_{B})$ with $\theta_{A}+\theta_{B}>\pi$, implies that $A+B$ is closable and its closure has a bounded inverse in $E$ given by 
\begin{gather*}
\mathcal{K}=\overline{(A+B)}^{-1}=\frac{1}{2\pi i}\int_{\Gamma_{\theta_{B}}}(A-z)^{-1}(B+z)^{-1}dz.
\end{gather*}
For details of the above we refer to Theorem 3.7 in \cite{PG} (or to Theorem 2.1 in \cite{Ro}). Also, from the analysis there (see e.g. 2.5 in \cite{Ro}), it follows that a sufficient condition for the closedness of $A+B$ is that $\mathcal{K}$ maps to one of the domains $\mathcal{D}(A)$ or $\mathcal{D}(B)$. By a sectoriality extension argument and Cauchy's theorem, we can replace the path $\Gamma_{\theta_{B}}$ in the definition of $\mathcal{K}$ by $\pm \delta+\Gamma_{\theta_{B}-\varepsilon}$, for some $\delta>0$ and $\varepsilon>0$ sufficiently close to zero. Take $w\in\mathbb{C}$ with $\Re(w)<0$. By Cauchy's theorem and Fubini's theorem (for the Bochner integral), with $\rho>0$ sufficiently small, we have that
\begin{eqnarray*}
\lefteqn{\mathcal{K}A^{w}=\frac{1}{2\pi i}\int_{-\delta+\Gamma_{\theta_{B}}}(A-z)^{-1}(B+z)^{-1}(\frac{1}{2\pi i}\int_{\Gamma_{\rho,\theta_{A}}}(-\lambda)^{w}(A+\lambda)^{-1}d\lambda)dz}\\
&=&(\frac{1}{2\pi i})^{2}\int_{-\delta+\Gamma_{\theta_{B}}}\int_{\Gamma_{\rho,\theta_{A}}}((A-z)^{-1}-(A+\lambda)^{-1})(B+z)^{-1}(\lambda+z)^{-1}(-\lambda)^{w}d\lambda dz\\
&=&(\frac{1}{2\pi i})^{2}\int_{-\delta+\Gamma_{\theta_{B}}}\int_{\Gamma_{\rho,\theta_{A}}}(A-z)^{-1}(B+z)^{-1}(\lambda+z)^{-1}(-\lambda)^{w}d\lambda dz\\
&&-(\frac{1}{2\pi i})^{2}\int_{\Gamma_{\rho,\theta_{A}}}\int_{-\delta+\Gamma_{\theta_{B}}}(A+\lambda)^{-1}(B+z)^{-1}(\lambda+z)^{-1}(-\lambda)^{w}dz d\lambda\\
&=&\frac{1}{2\pi i}\int_{-\Gamma_{\rho,\theta_{A}}}(A-\lambda)^{-1}(B+\lambda)^{-1}\lambda^{w}d\lambda.
\end{eqnarray*}
Since the integral 
\begin{gather*}
\int_{-\Gamma_{\rho,\theta_{A}}}A(A-\lambda)^{-1}(B+\lambda)^{-1}\lambda^{w}d\lambda
\end{gather*}
converges absolutely, we find that $\mathcal{K}A^{w}\in\mathcal{D}(A)$ and
\begin{eqnarray}\nonumber
\lefteqn{A\mathcal{K}A^{w}=\frac{1}{2\pi i}\int_{-\Gamma_{\rho,\theta_{A}}}A(A-\lambda)^{-1}(B+\lambda)^{-1}\lambda^{w}d\lambda}\\\nonumber
&=&\frac{1}{2\pi i}\int_{-\Gamma_{\rho,\theta_{A}}}(A-\lambda+\lambda)(A-\lambda)^{-1}(B+\lambda)^{-1}\lambda^{w}d\lambda\\\nonumber
&=&\frac{1}{2\pi i}\int_{-\Gamma_{\rho,\theta_{A}}}(B+\lambda)^{-1}\lambda^{w}d\lambda+\frac{1}{2\pi i}\int_{-\Gamma_{\rho,\theta_{A}}}(A-\lambda)^{-1}(B+\lambda)^{-1}\lambda^{1+w}d\lambda\\\label{e1}
&=&\frac{1}{2\pi i}\int_{-\Gamma_{\rho,\theta_{A}}}(A-\lambda)^{-1}(B+\lambda)^{-1}\lambda^{1+w}d\lambda.
\end{eqnarray}

Similarly, we have that
\begin{eqnarray*}
\lefteqn{\mathcal{K}B^{w}=\frac{1}{2\pi i}\int_{\delta+\Gamma_{\theta_{B}-\varepsilon}}(A-z)^{-1}(B+z)^{-1}(\frac{1}{2\pi i}\int_{\Gamma_{\rho,\theta_{B}}}(-\lambda)^{w}(B+\lambda)^{-1}d\lambda)dz}\\
&=&(\frac{1}{2\pi i})^{2}\int_{\delta+\Gamma_{\theta_{B}-\varepsilon}}\int_{\Gamma_{\rho,\theta_{B}}}(A-z)^{-1}((B+z)^{-1}-(B+\lambda)^{-1})(\lambda-z)^{-1}(-\lambda)^{w}d\lambda dz\\
&=&(\frac{1}{2\pi i})^{2}\int_{\delta+\Gamma_{\theta_{B}-\varepsilon}}\int_{\Gamma_{\rho,\theta_{B}}}(A-z)^{-1}(B+z)^{-1}(\lambda-z)^{-1}(-\lambda)^{w}d\lambda dz\\
&&-(\frac{1}{2\pi i})^{2}\int_{\Gamma_{\rho,\theta_{B}}}\int_{\delta+\Gamma_{\theta_{B}-\varepsilon}}(A-z)^{-1}(B+\lambda)^{-1}(\lambda-z)^{-1}(-\lambda)^{w}dz d\lambda\\
&=&\frac{1}{2\pi i}\int_{\Gamma_{\rho,\theta_{B}}}(A-\lambda)^{-1}(B+\lambda)^{-1}(-\lambda)^{w}d\lambda.
\end{eqnarray*}
Since the integral
\begin{gather*}
\int_{\Gamma_{\rho,\theta_{B}}}A(A-\lambda)^{-1}(B+\lambda)^{-1}(-\lambda)^{w}d\lambda
\end{gather*}
converges absolutely, we find that $\mathcal{K}B^{w}\in\mathcal{D}(A)$ and
\begin{eqnarray}\nonumber
\lefteqn{A\mathcal{K}B^{w}=\frac{1}{2\pi i}\int_{\Gamma_{\rho,\theta_{B}}}A(A-\lambda)^{-1}(B+\lambda)^{-1}(-\lambda)^{w}d\lambda}\\\nonumber
&=&\frac{1}{2\pi i}\int_{\Gamma_{\rho,\theta_{B}}}(A-\lambda+\lambda)(A-\lambda)^{-1}(B+\lambda)^{-1}(-\lambda)^{w}d\lambda\\\nonumber
&=&\frac{1}{2\pi i}\int_{\Gamma_{\rho,\theta_{B}}}(B+\lambda)^{-1}(-\lambda)^{w}d\lambda-\frac{1}{2\pi i}\int_{\Gamma_{\rho,\theta_{B}}}(A-\lambda)^{-1}(B+\lambda)^{-1}(-\lambda)^{1+w}d\lambda\\\label{e2}
&=&B^{w}-\frac{1}{2\pi i}\int_{\Gamma_{\rho,\theta_{B}}}(A-\lambda)^{-1}(B+\lambda)^{-1}(-\lambda)^{1+w}d\lambda.
\end{eqnarray}

By taking $w=-(\theta+\phi)+it$ with $t\in\mathbb{R}$, $\theta+\phi\in(0,1)$ and $0<\theta,\phi<1$, we can let $\rho=0$ in the equation (\ref{e1}). Then, for any $n\in\mathbb{N}$, we infer that
\begin{eqnarray}\nonumber
\lefteqn{A\mathcal{K}A^{-\theta+it}=A^{\phi}\frac{1}{2\pi i}\int_{-\Gamma_{\theta_{A}}}(A-\lambda)^{-1}(B+\lambda)^{-1}\lambda^{1-(\theta+\phi)+it}d\lambda}\\\nonumber
&=&\frac{1}{2\pi i}\int_{-\Gamma_{\theta_{A}}}A^{\phi}(A-\lambda)^{-1}(B+\lambda)^{-1}\lambda^{1-(\theta+\phi)+it}d\lambda\\\nonumber
&=&\frac{1}{2\pi i}\int_{-\Gamma_{\theta_{A}},|\lambda|\leq 1}A^{\phi}(A-\lambda)^{-1}(B+\lambda)^{-1}\lambda^{1-(\theta+\phi)+it}d\lambda\\\nonumber
&&+\frac{1}{2\pi i}\int_{-\Gamma_{\theta_{A}},1<|\lambda|<e^{n}}A^{\phi}(A-\lambda)^{-1}(B+\lambda)^{-1}\lambda^{1-(\theta+\phi)+it}d\lambda\\\label{e3}
&&+\frac{1}{2\pi i}\int_{-\Gamma_{\theta_{A}},e^{n}\leq |\lambda|}A^{\phi}(A-\lambda)^{-1}(B+\lambda)^{-1}\lambda^{1-(\theta+\phi)+it}d\lambda,
\end{eqnarray}
where we have used the fact that the integral 
\begin{gather*}
\int_{-\Gamma_{\theta_{A}}}A^{\phi}(A-\lambda)^{-1}(B+\lambda)^{-1}\lambda^{1-(\theta+\phi)+it}d\lambda
\end{gather*}
converges absolutely by Lemma \ref{l1} (by noting that $A^{\phi}(A-\lambda)^{-1}=A(A-\lambda)^{-1}A^{\phi-1}$).

Similarly, by (\ref{e2}) with $\rho=0$ we obtain
\begin{eqnarray}\nonumber
\lefteqn{A\mathcal{K}B^{-\theta+it}=B^{-\theta+it}-B^{\phi}\frac{1}{2\pi i}\int_{\Gamma_{\theta_{B}-\varepsilon}}(A-\lambda)^{-1}(B+\lambda)^{-1}(-\lambda)^{1-(\theta+\phi)+it}d\lambda}\\\nonumber
&=&B^{-\theta+it}-\frac{1}{2\pi i}\int_{\Gamma_{\theta_{B}-\varepsilon}}(A-\lambda)^{-1}B^{\phi}(B+\lambda)^{-1}(-\lambda)^{1-(\theta+\phi)+it}d\lambda\\\nonumber
&=&B^{-\theta+it}-\frac{1}{2\pi i}\int_{\Gamma_{\theta_{B}-\varepsilon},|\lambda|\leq 1}(A-\lambda)^{-1}B^{\phi}(B+\lambda)^{-1}(-\lambda)^{1-(\theta+\phi)+it}d\lambda\\\nonumber
&&-\frac{1}{2\pi i}\int_{\Gamma_{\theta_{B}-\varepsilon},1<|\lambda|<e^{n}}(A-\lambda)^{-1}B^{\phi}(B+\lambda)^{-1}(-\lambda)^{1-(\theta+\phi)+it}d\lambda\\\label{e4}
&&-\frac{1}{2\pi i}\int_{\Gamma_{\theta_{B}-\varepsilon},e^{n}\leq |\lambda|}(A-\lambda)^{-1}B^{\phi}(B+\lambda)^{-1}(-\lambda)^{1-(\theta+\phi)+it}d\lambda,
\end{eqnarray}
where we have used the fact that the integral
\begin{gather*}
\int_{\Gamma_{\theta_{B}-\varepsilon}}(A-\lambda)^{-1}B^{\phi}(B+\lambda)^{-1}(-\lambda)^{1-(\theta+\phi)+it}d\lambda
\end{gather*}
converges absolutely by Lemma \ref{l1}.

Assume first that $A$ is $T$-sectorial and $B$ admits a bounded $H^{\infty}$-calculus. By writing $\tilde{\theta}_{B}=\theta_{B}-\varepsilon$ and $\lambda=re^{\pm i\tilde{\theta}_{B}}$ in the third term on the right hand side of (\ref{e4}), we have that
\begin{eqnarray*}
\lefteqn{-\frac{1}{2\pi i}\int_{\Gamma_{\tilde{\theta}_{B}},1<|\lambda|<e^{n}}(A-\lambda)^{-1}B^{\phi}(B+\lambda)^{-1}(-\lambda)^{1-(\theta+\phi)+it}d\lambda}\\
&=&\frac{e^{i\pi(\theta+\phi)}e^{(\pi-\tilde{\theta}_{B})t}}{2\pi i}\int_{1}^{e^{n}}(A-re^{i\tilde{\theta}_{B}})^{-1}B^{\phi}(B+re^{i\tilde{\theta}_{B}})^{-1}r^{1-(\theta+\phi)+it}e^{i\tilde{\theta}_{B}(1-\theta-\phi)}e^{i\tilde{\theta}_{B}}dr\\
&&-\frac{e^{-i\pi(\theta+\phi)}e^{(\tilde{\theta}_{B}-\pi)t}}{2\pi i}\int_{1}^{e^{n}}(A-re^{-i\tilde{\theta}_{B}})^{-1}B^{\phi}(B+re^{-i\tilde{\theta}_{B}})^{-1}r^{1-(\theta+\phi)+it}e^{-i\tilde{\theta}_{B}(1-\theta-\phi)}e^{-i\tilde{\theta}_{B}}dr\\
&=&C_{\theta,\phi}(t)\int_{1}^{e^{n}}(A-re^{i\tilde{\theta}_{B}})^{-1}(r^{-1}B)^{\phi}(r^{-1}B+e^{i\tilde{\theta}_{B}})^{-1}r^{1-\theta+it}e^{i\tilde{\theta}_{B}}\frac{dr}{r}\\
&&-\tilde{C}_{\theta,\phi}(t)\int_{1}^{e^{n}}(A-re^{-i\tilde{\theta}_{B}})^{-1}(r^{-1}B)^{\phi}(r^{-1}B+e^{-i\tilde{\theta}_{B}})^{-1}r^{1-\theta+it}e^{-i\tilde{\theta}_{B}}\frac{dr}{r},
\end{eqnarray*}
where
\begin{gather*}
C_{\theta,\phi}(t)=e^{i\tilde{\theta}_{B}}\frac{e^{i(\pi-\tilde{\theta}_{B})(\theta+\phi)}e^{(\pi-\tilde{\theta}_{B})t}}{2\pi i} \,\,\,\,\,\, \mbox{and} \,\,\,\,\,\, \tilde{C}_{\theta,\phi}(t)=e^{-i\tilde{\theta}_{B}}\frac{e^{i(\tilde{\theta}_{B}-\pi)(\theta+\phi)}e^{(\tilde{\theta}_{B}-\pi)t}}{2\pi i}.
\end{gather*}

If we pass to $e$-adyc decomposition of $[1,e^{n}]$,  for any $u\in E$ we obtain that
\begin{eqnarray*}
\lefteqn{-\frac{1}{2\pi i}\int_{\Gamma_{\tilde{\theta}_{B}},1<|\lambda|<e^{n}}(A-\lambda)^{-1}B^{\phi}(B+\lambda)^{-1}(-\lambda)^{1-(\theta+\phi)+it}ud\lambda}\\
&=&C_{\theta,\phi}(t)\sum_{k=0}^{n-1}\int_{e^{k}}^{e^{k+1}}(A-re^{i\tilde{\theta}_{B}})^{-1}(r^{-1}B)^{\phi}(r^{-1}B+e^{i\tilde{\theta}_{B}})^{-1}r^{1-\theta+it}e^{i\tilde{\theta}_{B}}u\frac{dr}{r}\\
&&-\tilde{C}_{\theta,\phi}(t)\sum_{k=0}^{n-1}\int_{e^{k}}^{e^{k+1}}(A-re^{-i\tilde{\theta}_{B}})^{-1}(r^{-1}B)^{\phi}(r^{-1}B+e^{-i\tilde{\theta}_{B}})^{-1}r^{1-\theta+it}e^{-i\tilde{\theta}_{B}}u\frac{dr}{r}\\
&=&C_{\theta,\phi}(t)\sum_{k=0}^{n-1}\int_{1}^{e}(A-x e^{k}e^{i\tilde{\theta}_{B}})^{-1}(x^{-1}e^{-k}B)^{\phi}(x^{-1}e^{-k}B+e^{i\tilde{\theta}_{B}})^{-1}x^{1-\theta+it}e^{(1-\theta)k}e^{ikt}e^{i\tilde{\theta}_{B}}u\frac{dx}{x}\\
&&-\tilde{C}_{\theta,\phi}(t)\sum_{k=0}^{n-1}\int_{1}^{e}(A-x e^{k}e^{-i\tilde{\theta}_{B}})^{-1}(x^{-1}e^{-k}B)^{\phi}(x^{-1}e^{-k}B+e^{-i\tilde{\theta}_{B}})^{-1}x^{1-\theta+it}e^{(1-\theta)k}e^{ikt}e^{-i\tilde{\theta}_{B}}u\frac{dx}{x}\\
&=&C_{\theta,\phi}(t)\int_{1}^{e}\sum_{k=0}^{n-1}(A-x e^{k}e^{i\tilde{\theta}_{B}})^{-1}\mathcal{B}_{\phi,k}^{+}(x)x^{1-\theta+it}e^{(1-\theta)k}e^{ikt}e^{i\tilde{\theta}_{B}}u\frac{dx}{x}\\
&&-\tilde{C}_{\theta,\phi}(t)\int_{1}^{e}\sum_{k=0}^{n-1}(A-x e^{k}e^{-i\tilde{\theta}_{B}})^{-1}\mathcal{B}_{\phi,k}^{-}(x)x^{1-\theta+it}e^{(1-\theta)k}e^{ikt}e^{-i\tilde{\theta}_{B}}u\frac{dx}{x},
\end{eqnarray*}
where 
\begin{gather*}
\mathcal{B}_{\phi,k}^{\pm}(x)=(x^{-1}e^{-k}B)^{\phi}(x^{-1}e^{-k}B+e^{\pm i\tilde{\theta}_{B}})^{-1}.
\end{gather*}
If we take the $L^{p}$ norm, with the same $p$ as in the $T$-sectoriality of $A$, by H\"older's inequality and Fubini's theorem we find that
\begin{eqnarray*}
\lefteqn{\|-\frac{1}{2\pi i}\int_{\Gamma_{\tilde{\theta}_{B}},1<|\lambda|<e^{n}}(A-\lambda)^{-1}B^{\phi}(B+\lambda)^{-1}(-\lambda)^{1-(\theta+\phi)+it}ud\lambda\|_{L^{p}(0,2\pi;E)}}\\
&\leq&\frac{e^{2\pi(\pi-\tilde{\theta}_{B})}}{2\pi}(\int_{0}^{2\pi}(\int_{1}^{e}\|\sum_{k=0}^{n-1}(A-x e^{k}e^{i\tilde{\theta}_{B}})^{-1}\mathcal{B}_{\phi,k}^{+}(x)x^{1-\theta}e^{(1-\theta)k}e^{ikt}e^{i\tilde{\theta}_{B}}u\|\frac{dx}{x})^{p}dt)^{\frac{1}{p}}\\
&&+\frac{1}{2\pi}(\int_{0}^{2\pi}(\int_{1}^{e}\|\sum_{k=0}^{n-1}(A-x e^{k}e^{-i\tilde{\theta}_{B}})^{-1}\mathcal{B}_{\phi,k}^{-}(x)x^{1-\theta}e^{(1-\theta)k}e^{ikt}e^{-i\tilde{\theta}_{B}}u\|\frac{dx}{x})^{p}dt)^{\frac{1}{p}}\\
&\leq&\frac{e^{2\pi(\pi-\tilde{\theta}_{B})}}{2\pi}(e-1)^{1-\frac{1}{p}}(\int_{0}^{2\pi}\int_{1}^{e}\|\sum_{k=0}^{n-1}(A-x e^{k}e^{i\tilde{\theta}_{B}})^{-1}\mathcal{B}_{\phi,k}^{+}(x)x^{1-\theta}e^{(1-\theta)k}e^{ikt}e^{i\tilde{\theta}_{B}}u\|^{p}\frac{dx}{x^{p}}dt)^{\frac{1}{p}}\\
&&+\frac{1}{2\pi}(e-1)^{1-\frac{1}{p}}(\int_{0}^{2\pi}\int_{1}^{e}\|\sum_{k=0}^{n-1}(A-x e^{k}e^{-i\tilde{\theta}_{B}})^{-1}\mathcal{B}_{\phi,k}^{-}(x)x^{1-\theta}e^{(1-\theta)k}e^{ikt}e^{-i\tilde{\theta}_{B}}u\|^{p}\frac{dx}{x^{p}}dt)^{\frac{1}{p}}\\
&\leq&\frac{e^{2\pi(\pi-\tilde{\theta}_{B})}}{2\pi}(e-1)^{1-\frac{1}{p}}(\int_{1}^{e}\int_{0}^{2\pi}\|\sum_{k=0}^{n-1}(A-x e^{k}e^{i\tilde{\theta}_{B}})^{-1}\mathcal{B}_{\phi,k}^{+}(x)x^{1-\theta}e^{(1-\theta)k}e^{ikt}e^{i\tilde{\theta}_{B}}u\|^{p}dt\frac{dx}{x^{p}})^{\frac{1}{p}}\\
&&+\frac{1}{2\pi}(e-1)^{1-\frac{1}{p}}(\int_{1}^{e}\int_{0}^{2\pi}\|\sum_{k=0}^{n-1}(A-x e^{k}e^{-i\tilde{\theta}_{B}})^{-1}\mathcal{B}_{\phi,k}^{-}(x)x^{1-\theta}e^{(1-\theta)k}e^{ikt}e^{-i\tilde{\theta}_{B}}u\|^{p}dt\frac{dx}{x^{p}})^{\frac{1}{p}}.
\end{eqnarray*}
Hence, by the assumptions on $A$, there exists some collection 
\begin{gather*}
a_{0}(t),...,a_{n-1}(t),\tilde{a}_{0}(t),...,\tilde{a}_{n-1}(t)\in\{f\in L^{\infty}(0,2\pi)\, | \, \|f\|_{\infty}\leq1\}
\end{gather*}
(where in the following we use the same symbol to denote some appropriate representatives of the elements of he above collection) and some constants $C_{A}^{p,\pi-\tilde{\theta}_{B}}$, $C_{A}^{p,\tilde{\theta}_{B}-\pi}$ such that
\begin{eqnarray*}
\lefteqn{\|-\frac{1}{2\pi i}\int_{\Gamma_{\tilde{\theta}_{B}},1<|\lambda|<e^{n}}(A-\lambda)^{-1}B^{\phi}(B+\lambda)^{-1}(-\lambda)^{1-(\theta+\phi)+it}ud\lambda\|_{L^{p}(0,2\pi;E)}}\\
&\leq&\frac{e-1}{2\pi}e^{2\pi(\pi-\tilde{\theta}_{B})}\sup_{x\in(1,e)}(\int_{0}^{2\pi}\|\sum_{k=0}^{n-1}(A-x e^{k}e^{i\tilde{\theta}_{B}})^{-1}\mathcal{B}_{\phi,k}^{+}(x)x^{1-\theta}e^{(1-\theta)k}e^{ikt}e^{i\tilde{\theta}_{B}}u\|^{p}dt)^{\frac{1}{p}}\\
&&+\frac{e-1}{2\pi}\sup_{x\in(1,e)}(\int_{0}^{2\pi}\|\sum_{k=0}^{n-1}(A-x e^{k}e^{-i\tilde{\theta}_{B}})^{-1}\mathcal{B}_{\phi,k}^{-}(x)x^{1-\theta}e^{(1-\theta)k}e^{ikt}e^{-i\tilde{\theta}_{B}}u\|^{p}dt)^{\frac{1}{p}}\\
&\leq&C_{A}^{p,\pi-\tilde{\theta}_{B}}e^{2\pi(\pi-\tilde{\theta}_{B})}\frac{e-1}{2\pi}\sup_{x\in(1,e)}\sup_{a_{k}}(\int_{0}^{2\pi}\|\sum_{k=0}^{n-1}\mathcal{B}_{\phi,k}^{+}(x)x^{-\theta}e^{-\theta k}a_{k}(t)u\|^{p}dt)^{\frac{1}{p}}\\
&&+C_{A}^{p,\tilde{\theta}_{B}-\pi}\frac{e-1}{2\pi}\sup_{x\in(1,e)}\sup_{\tilde{a}_{k}}(\int_{0}^{2\pi}\|\sum_{k=0}^{n-1}\mathcal{B}_{\phi,k}^{-}(x)x^{-\theta}e^{-\theta k}\tilde{a}_{k}(t)u\|^{p}dt)^{\frac{1}{p}}\\
&\leq&C_{A}^{p,\pi-\tilde{\theta}_{B}}\frac{(e-1)}{(2\pi)^{1-\frac{1}{p}}}e^{2\pi(\pi-\tilde{\theta}_{B})}\sup_{x\in(1,e)}\sup_{a_{k}}\sup_{t\in(0,2\pi)}\|\sum_{k=0}^{n-1}\mathcal{B}_{\phi,k}^{+}(x)x^{-\theta}e^{-\theta k}a_{k}(t)u\|\\
&&+C_{A}^{p,\tilde{\theta}_{B}-\pi}\frac{(e-1)}{(2\pi)^{1-\frac{1}{p}}}\sup_{x\in(1,e)}\sup_{\tilde{a}_{k}}\sup_{t\in(0,2\pi)}\|\sum_{k=0}^{n-1}\mathcal{B}_{\phi,k}^{-}(x)x^{-\theta}e^{-\theta k}\tilde{a}_{k}(t)u\|.
\end{eqnarray*}
Thus, by the assumption on $B$, there exists some constant $C_{B}$ such that
\begin{eqnarray}\nonumber
\lefteqn{\|-\frac{1}{2\pi i}\int_{\Gamma_{\tilde{\theta}_{B}},1<|\lambda|<e^{n}}(A-\lambda)^{-1}B^{\phi}(B+\lambda)^{-1}(-\lambda)^{1-(\theta+\phi)+it}ud\lambda\|_{L^{p}(0,2\pi;E)}}\\\nonumber
&\leq&C_{A}^{p,\pi-\tilde{\theta}_{B}}C_{B}\frac{(e-1)}{(2\pi)^{1-\frac{1}{p}}}e^{2\pi(\pi-\tilde{\theta}_{B})}\sup_{x\in(1,e)}\sup_{a_{k}}\sup_{t\in(0,2\pi)}\sup_{z\in\mathbb{C}\setminus\Lambda_{\theta_{B}}}|\sum_{k=0}^{n-1}\frac{(-x^{-1}e^{-k}z)^{\phi}}{-x^{-1}e^{-k}z+e^{i\tilde{\theta}_{B}}}x^{-\theta}e^{-\theta k}a_{k}(t)|\|u\|\\\nonumber
&&+C_{A}^{p,\tilde{\theta}_{B}-\pi}C_{B}\frac{(e-1)}{(2\pi)^{1-\frac{1}{p}}}\sup_{x\in(1,e)}\sup_{\tilde{a}_{k}}\sup_{t\in(0,2\pi)}\sup_{z\in\mathbb{C}\setminus\Lambda_{\theta_{B}}}|\sum_{k=0}^{n-1}\frac{(-x^{-1}e^{-k}z)^{\phi}}{-x^{-1}e^{-k}z+e^{-i\tilde{\theta}_{B}}}x^{-\theta}e^{-\theta k}\tilde{a}_{k}(t)|\|u\|\\\nonumber
&\leq&C_{A}^{p,\pi-\tilde{\theta}_{B}}C_{B}\frac{(e-1)}{(2\pi)^{1-\frac{1}{p}}}e^{2\pi(\pi-\tilde{\theta}_{B})}\sup_{x\in(1,e)}\sup_{z\in\mathbb{C}\setminus\Lambda_{\theta_{B}}}\sum_{k=0}^{\infty}|\frac{(-x^{-1}e^{-k}z)^{\phi}}{-x^{-1}e^{-k}z+e^{i\tilde{\theta}_{B}}}|\|u\|\\\label{669}
&&+C_{A}^{p,\tilde{\theta}_{B}-\pi}C_{B}\frac{(e-1)}{(2\pi)^{1-\frac{1}{p}}}\sup_{x\in(1,e)}\sup_{z\in\mathbb{C}\setminus\Lambda_{\theta_{B}}}\sum_{k=0}^{\infty}|\frac{(-x^{-1}e^{-k}z)^{\phi}}{-x^{-1}e^{-k}z+e^{-i\tilde{\theta}_{B}}}|\|u\|,
\end{eqnarray}
where the last term is finite. By applying $B^{-it}$ in \eqref{e4} and then taking the $L^{p}$ norm we obtain that
\begin{eqnarray*}
\lefteqn{(2\pi)^{\frac{1}{p}}\|A\mathcal{K}B^{-\theta}u\|}\\
&\leq&(2\pi)^{\frac{1}{p}}\|B^{-\theta}u\|+\|\frac{1}{2\pi i}B^{-it}\int_{\Gamma_{\tilde{\theta}_{B}},|\lambda|\leq 1}(A-\lambda)^{-1}B^{\phi}(B+\lambda)^{-1}(-\lambda)^{1-(\theta+\phi)+it}ud\lambda\|_{L^{p}(0,2\pi;E)}\\
&&+(\sup_{t\in(0,2\pi)}\|B^{-it}\|)\|-\frac{1}{2\pi i}\int_{\Gamma_{\tilde{\theta}_{B}},1<|\lambda|<e^{n}}(A-\lambda)^{-1}B^{\phi}(B+\lambda)^{-1}(-\lambda)^{1-(\theta+\phi)+it}ud\lambda\|_{L^{p}(0,2\pi;E)}\\\
&&+\|\frac{1}{2\pi i}B^{-it}\int_{\Gamma_{\tilde{\theta}_{B}},e^{n}\leq |\lambda|}(A-\lambda)^{-1}B^{\phi}(B+\lambda)^{-1}(-\lambda)^{1-(\theta+\phi)+it}ud\lambda\|_{L^{p}(0,2\pi;E)}.
\end{eqnarray*}
By taking the limit in the above inequality as $n\rightarrow\infty$, we find by \eqref{669} that there exists some constant $C_{A,B}>0$ independent of $\theta$ such that 
\begin{gather*}
\|A\mathcal{K}B^{-\theta}u\| \leq C_{A,B}\|u\|.
\end{gather*}
Since $\mathcal{K}:\mathcal{D}(A)\rightarrow\mathcal{D}(A)$ and $B^{-\theta}A\mathcal{K}\subset A\mathcal{K}B^{-\theta}$, by taking $\theta\rightarrow 0$ in the above relation we find that 
\begin{gather*}
\|A\mathcal{K}v\| \leq C_{A,B}\|v\| \,\,\,\,\,\, \mbox{for any} \,\,\,\,\,\, v\in \mathcal{D}(A).
\end{gather*}
The result then follows by a Cauchy sequence argument and the closedness of $A$. The case when $A$ admits a bounded $H^{\infty}$-calculus and $B$ is $T$-sectorial can be treated in a similarly way starting from (\ref{e3}).
\end{proof}

\begin{remark}
Theorem \ref{t1} still holds if in the definition of $T$-sectoriality we let $p$ to be equal to one or if we replace $e^{ikt}$ with $e^{imkt}$, for all $k$ and some fixed $m\in\mathbb{N}$ (the last follows by using $e^{m}$-adyc decomposition in the proof). Moreover, the same approach can be applied to the general case of the boundedness of the operator valued factional calculus, as in \cite{KW}. 
\end{remark}

We show next that the bounded imaginary powers property is stronger than the $T$-sectoriality, in the case of a UMD space. 

\begin{theorem}\label{t2}
Let $E$ be a UMD Banach space and $A$ be a sectorial operator in $E$ having bounded imaginary powers with power angle $\phi<\pi$. Then $A$ is $T$-sectorial of angle $\theta$ for any $\theta\in[0,\pi-\phi)$.
\end{theorem}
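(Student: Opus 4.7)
The plan is to verify the $T$-sectoriality condition with the natural choice $a_k(t)=e^{ikt}$ (each of unit $L^\infty$ norm), which reduces the task to the Fourier-multiplier estimate
\[
\Bigl\|\sum_{k=0}^{n}e^{ikt}T_k x_k\Bigr\|_{L^p(0,2\pi;E)}\le C\Bigl\|\sum_{k=0}^{n}e^{ikt}x_k\Bigr\|_{L^p(0,2\pi;E)}
\]
for $T_k=(I+re^{-k+i\psi}A)^{-1}$, where I write $\psi\in[-\theta,\theta]$ for what the definition calls $\phi$, to avoid a clash with the BIP power angle $\phi$.

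The starting point is the Mellin/BIP representation
\[
(I+wA)^{-1}=\tfrac{1}{2}I+\tfrac{i}{2}\,\mathrm{PV}\!\int_{\mathbb{R}}\frac{w^{is}}{\sinh\pi s}\,A^{is}\,ds,
\]
valid for $w$ with $|\arg w|<\pi-\phi$; this follows by inverting the Mellin transform of $(1+u)^{-1}$, shifting the contour to the imaginary axis (picking up half the residue at $s=0$ to produce the $\tfrac12 I$ term), and using BIP to interpret $A^{is}$ operationally. Substituting $w=z_k=re^{-k+i\psi}$ gives $w^{is}=r^{is}e^{-s\psi}e^{-iks}$; multiplying by $e^{ikt}$, summing over $k$, and interchanging sum and integral yields
\[
\sum_{k=0}^{n}e^{ikt}T_k x_k=\tfrac{1}{2}g(t)+\tfrac{i}{2}\,\mathrm{PV}\!\int_{\mathbb{R}}\frac{r^{is}e^{-s\psi}}{\sinh\pi s}\,A^{is}\,g(t-s)\,ds,\qquad g(t)=\sum_{k=0}^{n}e^{ikt}x_k.
\]
The remaining task is to bound the operator-valued singular-integral convolution on the right by a constant multiple of $\|g\|_{L^p(0,2\pi;E)}$.

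For that estimate I would split at $|s|=1$. For $|s|\ge 1$, combining $\|A^{is}\|\le Me^{\phi|s|}$ with the exponential decay of $1/\sinh\pi s$ gives a kernel bounded by $Me^{-(\pi-\phi-|\psi|)|s|}$, which is integrable because $\phi+|\psi|\le\phi+\theta<\pi$, so Minkowski's inequality suffices. For $|s|<1$ the kernel carries a Hilbert-transform-type singularity $\sim A^{is}/(\pi s)$, and the UMD property of $L^p(0,2\pi;E)$ (inherited from $E$) becomes essential. The cleanest way to finish is to observe that the operator in question equals $h_0(\log A+i\partial_t+\log r+i\psi)$ acting via the joint functional calculus of the two commuting $C_0$-groups $\{A^{is}\}$ on $E$ and $\{\tau_s\}$ (translation) on $L^p(0,2\pi)$, where $h_0(\zeta)=(1+e^\zeta)^{-1}$ is bounded and holomorphic on the strip $|\mathrm{Im}\,\zeta|<\pi-|\psi|$. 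Since $\pi-|\psi|>\phi$, the combined group $s\mapsto A^{is}\tau_s$ on the UMD space $L^p(0,2\pi;E)$ is exponentially bounded with rate $\phi$, and the Hieber--Pr\"uss transference principle for symbols holomorphic on strips of width greater than the group growth bound delivers the desired multiplier bound.

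The main obstacle is the $|s|<1$ part of the singular-integral estimate: the scalar kernel $1/\sinh\pi s$ is not absolutely integrable near $0$ and the operator factors $\{A^{is}\}_{|s|<1}$ are not uniformly close to the identity in operator norm, so neither Minkowski's inequality nor a reduction to the scalar periodic Hilbert transform suffices on its own. The resolution rests on using the BIP hypothesis (providing analyticity of the multiplier symbol in a strip of width strictly greater than $\phi$) together with the UMD hypothesis (providing the vector-valued transference/multiplier theorem); together these promote the scalar Fourier-multiplier estimate to the operator-valued setting and close the argument.
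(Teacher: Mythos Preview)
Your argument is correct and follows the same skeleton as the paper's: both start from the Cl\'ement--Pr\"uss principal-value representation of $(I+\rho e^{i\psi}A)^{-1}$ in terms of the imaginary powers $A^{-is}$, choose $a_k(t)=e^{ikt}$, and split the kernel $\pi/\sinh(\pi s)$ into an absolutely integrable piece (handled by Minkowski together with $\|A^{is}\|\le Me^{\phi|s|}$) and a singular $1/s$-piece near the origin, which is where the UMD hypothesis enters. The only substantive difference is in how the singular piece is dispatched. You package it as a bounded holomorphic function of the generator of the combined group $s\mapsto A^{is}\tau_s$ on the UMD space $L^p(\mathbb{T};E)$ and invoke the Hieber--Pr\"uss transference principle; the paper instead inserts $(rA)^{it}(rA)^{-it}$ and recognises the remaining integral as the truncated Hilbert transform of $u\mapsto\sum_k(-1)^ke^{iku}(rA)^{-iu}x_k$ on $L^p(-\pi,\pi;E)$, bounded directly by the UMD property, and then removes the factor $(rA)^{-iu}$ at the cost of another $\sup_{|u|\le\pi}\|A^{iu}\|$. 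The paper's manoeuvre is more elementary---it uses nothing beyond $L^p(E)$-boundedness of the Hilbert transform---while your route imports a heavier theorem whose proof, once unwound, is essentially that same Hilbert-transform reduction.
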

\begin{proof}
We use the representation formulas of the resolvent from the proof of Theorem 4 in \cite{CP}, and follow similar steps. Namely, if $A$ has bounded imaginary powers with power angle $\phi<\pi$, then 
\begin{gather*}
(I+\rho A)^{-1}x=\frac{1}{2\pi i} PV\int_{\mathbb{R}}(\rho A)^{-is}\frac{\pi}{\sinh(\pi s)}xds +\frac{1}{2}x, \,\,\,\,\,\, \mbox{for any} \,\,\,\,\,\, x\in E \,\,\,\,\,\, \mbox{and} \,\,\,\,\,\, \rho>0,
\end{gather*}
and
\begin{gather*}
(I+\rho e^{i\theta}A)^{-1}=(I+\rho A)^{-1}+\frac{1}{2\pi i}\int_{\mathbb{R}}(\rho A)^{-is}\frac{\pi(e^{\theta s}-1)}{\sinh(\pi s)}ds, \,\,\,\,\,\, \mbox{for any} \,\,\,\,\,\, |\theta|<\pi-\phi \,\,\,\,\,\, \mbox{and} \,\,\,\,\,\, \rho>0, 
\end{gather*}
where by $PV$ we mean Cauchy's principal value. For any $p\in[1,\infty)$, $\theta\in(\phi-\pi,\pi-\phi)$, $r\in[\frac{1}{e},1]$ and $x_{0},...,x_{n}\in E$, for any $n\in\mathbb{N}$, we have that
\begin{eqnarray}\nonumber
\lefteqn{\|\sum_{k=0}^{n}e^{ikt}(I+re^{-k+i\theta}A)^{-1}x_{k}\|_{L^{p}(0,2\pi;E)}}\\\nonumber
&\leq&\|\sum_{k=0}^{n}e^{ikt}(I+re^{-k}A)^{-1}x_{k}\|_{L^{p}(0,2\pi;E)}+\|\frac{1}{2\pi i}\int_{\mathbb{R}}\sum_{k=0}^{n}e^{ikt}(re^{-k} A)^{-is}\frac{\pi(e^{\theta s}-1)}{\sinh(\pi s)}x_{k}ds\|_{L^{p}(0,2\pi;E)}\\\nonumber
&\leq&\|\frac{1}{2\pi i} \int_{\mathbb{R}}\sum_{k=0}^{n}e^{ikt}(re^{-k}A)^{-is}(\frac{\pi}{\sinh(\pi s)}-\frac{\chi(s)}{s})x_{k}ds\|_{L^{p}(0,2\pi;E)}\\\nonumber
&&+\|\frac{1}{2\pi i}PV \int_{\mathbb{R}}\sum_{k=0}^{n}e^{ikt}(re^{-k}A)^{-is}\frac{\chi(s)}{s}x_{k}ds\|_{L^{p}(0,2\pi;E)}\\\label{ola}
&&+\|\sum_{k=0}^{n}\frac{e^{ikt}}{2}x_{k}\|_{L^{p}(0,2\pi;E)}+\|\frac{1}{2\pi i}\int_{\mathbb{R}}\sum_{k=0}^{n}e^{ikt}(re^{-k} A)^{-is}\frac{\pi(e^{\theta s}-1)}{\sinh(\pi s)}x_{k}ds\|_{L^{p}(0,2\pi;E)},
\end{eqnarray}
where $\chi(s)$ is the characteristic function of the interval $[-\pi,\pi]$. For the first term on the right hand side of the above equation we estimate
\begin{eqnarray}\nonumber
\lefteqn{\|\frac{1}{2\pi i} \int_{\mathbb{R}}\sum_{k=0}^{n}e^{ikt}(re^{-k}A)^{-is}(\frac{\pi}{\sinh(\pi s)}-\frac{\chi(s)}{s})x_{k}ds\|_{L^{p}(0,2\pi;E)}}\\\nonumber
&\leq&\frac{1}{2\pi}\int_{\mathbb{R}}\|A^{-is}\||\frac{\pi}{\sinh(\pi s)}-\frac{\chi(s)}{s}| \|\sum_{k=0}^{n}e^{ik(s+t)}x_{k}\|_{L^{p}(0,2\pi;E)}ds\\\label{ola1}
&\leq&\frac{1}{2\pi}(\int_{\mathbb{R}}\|A^{-is}\||\frac{\pi}{\sinh(\pi s)}-\frac{\chi(s)}{s}|(1+|s|)ds) \sup_{s\in\mathbb{R}}\|\sum_{k=0}^{n}\frac{e^{ik(s+t)}}{1+|s|}x_{k}\|_{L^{p}(0,2\pi;E)}.
\end{eqnarray}
Similarly, for the last term on the right hand side of (\ref{ola}) we have that 
\begin{eqnarray}\nonumber
\lefteqn{\|\frac{1}{2\pi i}\int_{\mathbb{R}}\sum_{k=0}^{n}e^{ikt}(re^{-k} A)^{-is}\frac{\pi(e^{\theta s}-1)}{\sinh(\pi s)}x_{k}ds\|_{L^{p}(0,2\pi;E)}}\\\nonumber
&\leq&\frac{1}{2\pi}\int_{\mathbb{R}}\|A^{-is}\||\frac{\pi(e^{\theta s}-1)}{\sinh(\pi s)}|\|\sum_{k=0}^{n}e^{ik(s+t)}x_{k}\|_{L^{p}(0,2\pi;E)}ds\\\label{ola2}
&\leq&\frac{1}{2\pi}(\int_{\mathbb{R}}\|A^{-is}\||\frac{\pi(e^{\theta s}-1)}{\sinh(\pi s)}|(1+|s|)ds)\sup_{s\in\mathbb{R}}\|\sum_{k=0}^{n}\frac{e^{ik(s+t)}}{1+|s|}x_{k}\|_{L^{p}(0,2\pi;E)}.
\end{eqnarray}
Finally, we estimate the second term on the right hand side of (\ref{ola}) by using the UMD property of the space $E$, i.e. the boundedness of the Hilbert transform, as follows
\begin{eqnarray}\nonumber
\lefteqn{\|\frac{1}{2\pi i}PV \int_{\mathbb{R}}\sum_{k=0}^{n}e^{ikt}(re^{-k}A)^{-is}\frac{\chi(s)}{s}x_{k}ds\|_{L^{p}(0,2\pi;E)}}\\\nonumber
&=&\|\frac{1}{2\pi i}PV \int_{\mathbb{R}}\sum_{k=0}^{n}(-1)^{k}e^{ikt}(re^{-k}A)^{-is}\frac{\chi(s)}{s}x_{k}ds\|_{L^{p}(-\pi,\pi;E)}\\\nonumber
&=&\|(rA)^{it}(rA)^{-it}\frac{1}{2\pi i}PV \int_{-\pi}^{\pi}\sum_{k=0}^{n}(-1)^{k}\frac{e^{ikt}}{s}(re^{-k}A)^{is}x_{k}ds\|_{L^{p}(-\pi,\pi;E)}\\\nonumber
&\leq&(\sup_{t\in[-\pi,\pi]}\|A^{it}\|)\|\frac{1}{2\pi i}PV \int_{-\pi}^{\pi}\sum_{k=0}^{n}(-1)^{k}\frac{e^{ik(t-s)}}{s}(rA)^{i(s-t)}x_{k}ds\|_{L^{p}(-\pi,\pi;E)}\\\nonumber
&\leq&C(\sup_{t\in[-\pi,\pi]}\|A^{it}\|)\|\sum_{k=0}^{n}(-1)^{k}e^{ikt}(rA)^{-it}x_{k}\|_{L^{p}(-\pi,\pi;E)}\\\label{ola3}
&\leq&C(\sup_{t\in[-\pi,\pi]}\|A^{it}\|)^{2}\|\sum_{k=0}^{n}e^{ikt}x_{k}\|_{L^{p}(0,2\pi;E)},
\end{eqnarray}
for some fixed constant $C$. The result now follows by (\ref{ola}),  (\ref{ola1}),  (\ref{ola2}) and  (\ref{ola3}) (note that the standard sectoriality follows by Theorem 4 in \cite{CP}).
\end{proof}

\section{$L^{p}$-maximal regularity}

Let $E$ be a Banach space and let the operator $B=\partial_{t}$ in $L^{p}(0,\tau;E)$ with 
\begin{gather*}
\mathcal{D}(B)=\{f(t)\in W^{1,p}(0,\tau;E)\,|\,f(0)=0\}, 
\end{gather*}
for some $p\in(1,\infty)$ and $\tau>0$ finite. We have that $\sigma(B)=\emptyset$, and for any $g\in L^{p}(0,\tau;E)$,
\begin{gather*}
(B+\lambda)^{-1}g=\int_{0}^{t}e^{\lambda(x-t)}g(x)dx, \,\,\, \forall \lambda\in\mathbb{C}.
\end{gather*} 
By Young's inequality for convolution (see III.4.2 in \cite{Am}), we infer that
\begin{gather*}
\|(B+\lambda)^{-1}\|\leq\frac{1-e^{-\mathrm{Re}(\lambda) \tau}}{\mathrm{Re}(\lambda)}, \,\,\, \forall \lambda\in\mathbb{C}.
\end{gather*} 
Hence, $B\in\mathcal{P}(\phi)$, for any $\phi\in[0,\pi/2)$. Furthermore, if the space $E$ is UMD, then $B$ admits a bounded $H^{\infty}$-calculus (see e.g. Theorem 8.5.8 in \cite{H}). By substituting $f(t)=e^{ct}h(t)$, $c\in\mathbb{R}$, in (\ref{AP}), we see that in the definition of the $L^{p}$-maximal regularity property we can consider $A\in\mathcal{P}(\theta)$ with $\theta>\frac{\pi}{2}$ instead of $A$ being an infinitesimal generator of a bounded analytic semigroup. Moreover, if $A$ is $T^{\ast}$-sectorial in $E$, by taking $p$ to be the same as that one in the $T^{\ast}$-sectorility of $A$, we see by Fubini's theorem that $A$ can be naturally extended to a $T^{\ast}$-sectorial operator in $L^{p}(0,\tau;E)$ by $(Af)(t)=Af(t)$. Thus, since $L^{p}$-maximal regularity is independent of $p$, Theorem \ref{t1} implies the following.

\begin{theorem}\label{t3}
Let $E$ be a UMD Banach space and $A$ be a sectorial operator in $E$ of angle greater than $\frac{\pi}{2}$. If for some $p\in(1,\infty)$ and $\tau>0$ finite the extension of $A$ in $L^{p}(0,\tau;E)$ is $T$-sectorial of angle greater than $\frac{\pi}{2}$, then $A$ has $L^{p}$-maximal regularity. 
\end{theorem}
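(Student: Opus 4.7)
The plan is to apply Theorem \ref{t1} to the pair $(A,B)$, where $B=\partial_t$ is the operator on $L^p(0,\tau;E)$ introduced above, and then read off $L^p$-maximal regularity from the conclusion $0\in\rho(A+B)$.

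First I would put the data in position to invoke Theorem \ref{t1}. The substitution $f(t)=e^{ct}h(t)$, already recorded in the paragraph preceding the statement, lets us replace ``$-A$ generates a bounded analytic semigroup'' by ``$A\in\mathcal{P}(\theta)$ with $\theta>\pi/2$'' in the definition of $L^p$-maximal regularity. The computation shown before the theorem places $B$ in $\mathcal{P}(\phi)$ for every $\phi\in[0,\pi/2)$, and the UMD hypothesis on $E$ equips $B$ with a bounded $H^\infty$-calculus of any such angle (Theorem 8.5.8 in \cite{H}). The pointwise extension $(Af)(t)=Af(t)$ of $A$ to $L^p(0,\tau;E)$ is closed and sectorial of the same angle $\theta_A>\pi/2$ as $A$, by applying Fubini to the Dunford resolvent formula, and by hypothesis this extension is $T$-sectorial of angle $\theta_A>\pi/2$.

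Next I would verify the commutation hypothesis and invoke Theorem \ref{t1}. The resolvent
\[
(B+\lambda)^{-1}g(t)=\int_0^t e^{\lambda(x-t)}g(x)\,dx
\]
is a Volterra convolution with a scalar kernel; the bounded resolvent of the extended $A$ acts pointwise in $t$ and therefore passes through this integral, so $A$ and $B$ are resolvent commuting. Choosing $\phi<\pi/2$ close enough to $\pi/2$ makes $\theta_A+\phi>\pi$, so Theorem \ref{t1} applies with $A$ in the $T$-sectorial role and $B$ in the $H^\infty$-calculus role. The conclusion is that $A+B$ with domain $\mathcal{D}(A)\cap\mathcal{D}(B)$ is closed in $L^p(0,\tau;E)$ and $0\in\rho(A+B)$.

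Finally I would unpack $0\in\rho(A+B)$ as the maximal regularity statement. For any $g\in L^p(0,\tau;E)$, set $f=(A+B)^{-1}g\in\mathcal{D}(A)\cap\mathcal{D}(B)$; then $f\in W^{1,p}(0,\tau;E)$ with $f(0)=0$ and $f'+Af=g$, and by uniqueness $f$ coincides with the variation-of-constants representation $\int_0^t e^{(x-t)A}g(x)\,dx$. Thus $A$ has $L^p$-maximal regularity for this particular $p$, and hence for every $p\in(1,\infty)$ by \cite{Do}. The only items to check beyond invoking Theorem \ref{t1} are the resolvent commutation and the preservation of the sectoriality angle under the pointwise extension of $A$; both are routine consequences of Fubini, and I do not expect a genuine obstacle here, since all the analytic work has already been done inside Theorem \ref{t1}.
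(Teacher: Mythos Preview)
Your proposal is correct and follows the same route as the paper: set up $B=\partial_t$ on $L^p(0,\tau;E)$, use UMD to equip it with a bounded $H^\infty$-calculus of angle $<\pi/2$, take the pointwise extension of $A$ (which is $T$-sectorial of angle $>\pi/2$ by hypothesis), and apply Theorem~\ref{t1}. You actually spell out two details the paper leaves implicit---the resolvent commutation of $A$ and $B$, and the translation of $0\in\rho(A+B)$ into the $L^p$-maximal regularity statement---but the architecture is identical.
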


\begin{corollary}
In a UMD Banach space any $T^{\ast}$-sectorial operator of angle greater than $\frac{\pi}{2}$ has $L^{p}$-maximal regularity. 
\end{corollary}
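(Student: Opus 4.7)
My plan is to derive this from Theorem \ref{t3}: given a $T^{\ast}$-sectorial operator $A$ of angle greater than $\pi/2$ on a UMD space $E$, I only need to verify that the pointwise extension $\tilde{A}$ of $A$ to $L^{p}(0,\tau;E)$ is $T$-sectorial of the same angle, where $p$ is the exponent witnessing the $T^{\ast}$-property of $A$. Since $T^{\ast}$-sectoriality in particular implies sectoriality, $A$ itself is already sectorial of angle greater than $\pi/2$, which is the remaining hypothesis of Theorem \ref{t3}.

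To verify $T$-sectoriality of $\tilde{A}$, I would define $(\tilde{A}f)(s) = A(f(s))$ on the natural Bochner domain. The resolvents $(I + re^{-k+i\phi}\tilde{A})^{-1}$ then act pointwise in $s$, so Fubini converts the defining norm on $L^{p}(0,2\pi;L^{p}(0,\tau;E))$ into an $s$-integral of the defining norm on $L^{p}(0,2\pi;E)$:
\[
\bigl\|\sum_{k=0}^{n}e^{ikt}(I+re^{-k+i\phi}\tilde{A})^{-1}x_{k}\bigr\|_{L^{p}(0,2\pi;L^{p}(0,\tau;E))}^{p} = \int_{0}^{\tau}\bigl\|\sum_{k=0}^{n}e^{ikt}(I+re^{-k+i\phi}A)^{-1}x_{k}(s)\bigr\|_{L^{p}(0,2\pi;E)}^{p}\,ds.
\]
For each $s$ the $T^{\ast}$-sectoriality of $A$ on $E$ supplies functions $a_{0}(t),\dots,a_{n}(t) \in L^{\infty}(0,2\pi)$ with $\|a_{k}\|_{\infty}\leq 1$ — and, crucially, depending only on $A$, $n$, $\phi$, $r$, not on $x_{0}(s),\dots,x_{n}(s)$. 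Hence the same $a_{k}$ can be used for every $s$, and a second application of Fubini yields
\[
\int_{0}^{\tau}\bigl\|\sum_{k=0}^{n}a_{k}(t)x_{k}(s)\bigr\|_{L^{p}(0,2\pi;E)}^{p}\,ds = \bigl\|\sum_{k=0}^{n}a_{k}(t)x_{k}\bigr\|_{L^{p}(0,2\pi;L^{p}(0,\tau;E))}^{p}.
\]
This establishes $T^{\ast}$-sectoriality (and hence $T$-sectoriality) of $\tilde{A}$ with the same constant and the same sectoriality angle as $A$.

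With $\tilde{A}$ in hand, Theorem \ref{t3} gives $L^{p}$-maximal regularity of $A$ for this particular $p$, and the $p$-independence of maximal regularity (\cite{Do}) then yields the conclusion for all $p \in (1,\infty)$. The main — and essentially only — subtlety is the ``$\ast$'' in $T^{\ast}$-sectoriality: had we only assumed plain $T$-sectoriality, the functions $a_{k}$ would depend on the vectors $x_{k}(s)$ and therefore on $s$, which would destroy the uniform-in-$s$ application of Fubini. I do not expect any other genuine obstacle; the argument is the standard pointwise-to-Bochner lift that the $T^{\ast}$-axiom is tailored to support.
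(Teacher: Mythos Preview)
Your proposal is correct and follows essentially the same route as the paper: the discussion preceding Theorem \ref{t3} already notes that for a $T^{\ast}$-sectorial operator the pointwise extension to $L^{p}(0,\tau;E)$ is again $T^{\ast}$-sectorial by Fubini (precisely because the $a_{k}$ are independent of the vectors), and the corollary is then an immediate consequence of Theorem \ref{t3}. Your explicit identification of why the ``$\ast$'' is essential matches the paper's reasoning exactly.
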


\begin{corollary}
If $E$ is a UMD Banach space and $A$ is a sectorial operator in $E$ having bounded imaginary powers with power angle $\phi<\frac{\pi}{2}$, then the extension of $A$ in $L^{p}(0,\tau;E)$ has again bounded imaginary powers with the same power angle $\phi$. Hence, Theorems \ref{t2} and \ref{t3} imply that $A$ has $L^{p}$-maximal regularity, which is the classical result of \cite{DV}.
\end{corollary}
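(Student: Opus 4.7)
The task reduces to proving the first assertion of the corollary, namely that the extension $\tilde{A}$ of $A$ on $L^{p}(0,\tau;E)$ defined by $(\tilde{A}f)(t)=Af(t)$, with natural domain $\{f\in L^{p}(0,\tau;E)\,:\,f(t)\in\mathcal{D}(A) \text{ a.e.}, Af(\cdot)\in L^{p}(0,\tau;E)\}$, has bounded imaginary powers with the same power angle $\phi$. Once this is established, since $\phi<\pi/2$ we have $\pi-\phi>\pi/2$, so Theorem \ref{t2} applied to $\tilde{A}$ on the UMD space $L^{p}(0,\tau;E)$ makes $\tilde{A}$ a $T$-sectorial operator of some angle strictly larger than $\pi/2$, and Theorem \ref{t3} then delivers $L^{p}$-maximal regularity for $A$.

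To identify the imaginary powers of $\tilde{A}$, I would first observe that the resolvent of $\tilde{A}$ acts pointwise, $((\tilde{A}+\lambda)^{-1}f)(t)=(A+\lambda)^{-1}f(t)$, which is immediate from boundedness of $(A+\lambda)^{-1}$ on $E$ and the definition of the extension. For $f\in\mathcal{D}(\tilde{A})$, I would apply Fubini's theorem for the Bochner integral to the defining expression
\[
\frac{\sin(i\pi s)}{i\pi s}\int_{0}^{\infty}\lambda^{is}(\tilde{A}+\lambda)^{-2}\tilde{A}f\,d\lambda
\]
to pull the $L^{p}$-integration inside and recognize the integrand, evaluated pointwise, as the corresponding expression defining $A^{is}f(t)$. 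Hence on $\mathcal{D}(\tilde{A})$ the operator acts as $f\mapsto A^{is}f(\cdot)$, and the hypothesis $\|A^{is}\|_{\mathcal{L}(E)}\leq Me^{\phi|s|}$ gives the pointwise bound
\[
\|\tilde{A}^{is}f\|_{L^{p}(0,\tau;E)}^{p}=\int_{0}^{\tau}\|A^{is}f(t)\|_{E}^{p}\,dt\leq M^{p}e^{p\phi|s|}\|f\|_{L^{p}(0,\tau;E)}^{p}.
\]
Since $\mathcal{D}(A)$ is dense in $E$, the simple functions valued in $\mathcal{D}(A)$ are dense in $L^{p}(0,\tau;E)$, so $\mathcal{D}(\tilde{A})$ is dense and the closure of this bounded pre-assignment is $\tilde{A}^{is}\in\mathcal{L}(L^{p}(0,\tau;E))$ with the same exponential bound. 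This gives bounded imaginary powers for $\tilde{A}$ with power angle $\phi$.

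The only technical point requiring care, and the main thing I would double-check, is the justification of the Fubini/closure interchange: one needs that for $f\in\mathcal{D}(\tilde{A})$ the $E$-valued integrand $\lambda\mapsto \lambda^{is}(A+\lambda)^{-2}Af(t)$ is Bochner-measurable and absolutely integrable uniformly enough in $t$ to apply Fubini, and that the resulting pointwise operator coincides with the closure appearing in the abstract definition of $\tilde{A}^{is}$. Both follow routinely from strong continuity of the integrand, the sectoriality estimates for $\tilde{A}$, and the fact that bounded imaginary powers are uniquely determined by their action on the core $\mathcal{D}(\tilde{A})$. With this in hand the corollary is just the chain Theorem \ref{t2} $\Rightarrow$ Theorem \ref{t3}, recovering Dore–Venni.
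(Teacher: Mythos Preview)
Your proposal is correct and follows exactly the route the paper indicates: the corollary in the paper has no separate proof beyond its own statement, which simply asserts that the extension of $A$ to $L^{p}(0,\tau;E)$ inherits bounded imaginary powers with the same angle $\phi$ and then invokes Theorems \ref{t2} and \ref{t3}. You have supplied the (routine) details the paper omits for the inheritance claim---pointwise action of the resolvent, Fubini in the defining integral, and density of $\mathcal{D}(A)$-valued simple functions---and your chain of implications (BIP for $\tilde{A}$ on the UMD space $L^{p}(0,\tau;E)$ $\Rightarrow$ $T$-sectoriality of angle $>\pi/2$ via Theorem \ref{t2} $\Rightarrow$ maximal regularity via Theorem \ref{t3}) is precisely the paper's intended argument.
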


\begin{corollary}
If $E$ is a Hilbert space, by taking $p=2$ we see that $T^{\ast}$-sectoriality becomes equivalent to standard sectoriality (by choosing $a_{k}(t)=e^{ikt}$ for all $k$). Since any Hilbert space is UMD, this proves the result of \cite{D}, i.e. that in a Hilbert space any infinitesimal generator of a bounded analytic semigroup has $L^{p}$-maximal regularity.
\end{corollary}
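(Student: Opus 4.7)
The plan is to verify the two assertions in the corollary in turn: first, the equivalence of $T^{\ast}$-sectoriality and standard sectoriality on a Hilbert space with $p=2$; second, the derivation from this of de~Simon's maximal regularity result.

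For the equivalence, the inclusion $\mathcal{T}^{\ast}(\theta)\subset\mathcal{P}(\theta)$ is built into the definition, so only the converse requires work. Given $A\in\mathcal{P}_{K}(\theta)$, I would test the $T^{\ast}$-sectoriality inequality with $a_{k}(t)=e^{ikt}$, which have $L^{\infty}(0,2\pi)$-norm one and, crucially, are independent of $x_{0},\dots,x_{n}$. The key analytic input is Plancherel's theorem for Hilbert-valued $L^{2}$:
\[
\Bigl\|\sum_{k=0}^{n}e^{ikt}y_{k}\Bigr\|_{L^{2}(0,2\pi;E)}^{2}=2\pi\sum_{k=0}^{n}\|y_{k}\|_{E}^{2}.
\]
Applied to both sides of the desired inequality with $y_{k}=(I+re^{-k+i\phi}A)^{-1}x_{k}$ on the left and $y_{k}=x_{k}$ on the right, this reduces matters to a uniform operator-norm bound on the resolvents. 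Writing $\alpha=re^{-k+i\phi}$, one has $(I+\alpha A)^{-1}=\alpha^{-1}(A+\alpha^{-1})^{-1}$, with $\alpha^{-1}\in\Lambda_{\theta}$ (since $|\phi|\leq\theta$) and $|\alpha^{-1}|=r^{-1}e^{k}\geq 1$; the sectoriality bound then gives $\|(I+\alpha A)^{-1}\|\leq K/(1+|\alpha|)\leq K$, uniformly in $k$, $r$, and $\phi$. This delivers $T^{\ast}$-sectoriality with constant $K$.

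For the $L^{p}$-maximal regularity conclusion, I would then invoke the preceding corollary: in a UMD space, any $T^{\ast}$-sectorial operator of angle greater than $\pi/2$ has $L^{p}$-maximal regularity. Every Hilbert space is UMD, and the equivalence just established upgrades every sectorial operator of angle greater than $\pi/2$ on a Hilbert space to a $T^{\ast}$-sectorial one. Since $-A$ generates a bounded analytic semigroup precisely when, after the harmless shift $f(t)=e^{ct}h(t)$ discussed before Theorem~\ref{t3}, one has $A\in\mathcal{P}(\theta)$ for some $\theta>\pi/2$, the result of \cite{D} follows. The only mildly delicate point is the bookkeeping of $\alpha^{-1}$ in the resolvent identity so as to invoke the sectoriality bound in the correct form; beyond that, the argument is driven entirely by the orthogonality of the characters $e^{ikt}$ in $L^{2}(0,2\pi)$, which is exactly the feature that fails outside the Hilbert setting.
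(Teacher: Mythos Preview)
Your proposal is correct and follows exactly the approach the paper indicates: the corollary in the paper has no separate proof, the argument being encoded in the hints ``$p=2$'' and ``$a_{k}(t)=e^{ikt}$'', and you have simply supplied the intended details (Plancherel in $L^{2}(0,2\pi;E)$ for Hilbert $E$, the uniform resolvent bound from sectoriality, and the invocation of the preceding corollary via the UMD property of Hilbert spaces). The resolvent bookkeeping $(I+\alpha A)^{-1}=\alpha^{-1}(A+\alpha^{-1})^{-1}$ and the resulting estimate $\|(I+\alpha A)^{-1}\|\leq K/(1+|\alpha|)\leq K$ are handled correctly.
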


\end{document}